\newtheorem{theorem}{Theorem}[section]
\newtheorem{lemma}[theorem]{Lemma}
\newtheorem{proposition}[theorem]{Proposition}
\theoremstyle{definition}
\newtheorem{definition}[theorem]{Definition}
\theoremstyle{remark}
\numberwithin{equation}{section}
\begin{document}
\title{Urysohn 1-width for 4 and 5 manifolds with positive biRicci curvature}

\author{Junyu Ma}
\address{University of Pennsylvania}
\curraddr{University of Pennsylvania}
\email{junyuma@sas.upenn.edu}
\thanks{}

\subjclass[2023]{53C12}


\dedicatory{}

\keywords{Differential geometry, Fundamental group}

\begin{abstract}
 In this paper, we show that closed four and five manifolds with positive biRicci curvature have finite Urysohn 1-width bounded by a constant that depends on the curvature bounds. During the proof, we can also observe that the fundamental group of those manifolds is virtually free. This gives a new proof that $T^{2}\times S^{2}$ can't admit positive biRicci curvature. If we additionally assume the manifold is complete non-compact but with non-negative Ricci curvature, we can prove the linear volume growth for such manifolds.
\end{abstract}

\maketitle

\section{Introduction}

While a positive lower bound on the Ricci curvature can be used to bound the diameter of a Riemannian manifold by Bonnet-Myers, the same is not true for scalar curvature starting in dimensions $n\geq 3$, as observed by Gromov-Lawson \cite{GL80}. Nonetheless, they showed the following result in \cite{gl83}:

\begin{theorem}\label{thmgl}(Gromov-Lawson \cite{gl83})
    Suppose that $(M^{3},g)$ is complete simply connected with lower scalar curvature bound $R\geq 2$. Then for $p\in M$ with $f(x)=d_{g}(x,p)$. Then each component of $f^{-1}(t)$ has diameter bounded by $\frac{12}{\sqrt{3}}\pi$.
\end{theorem}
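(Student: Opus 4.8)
The plan is to realize a component of $f^{-1}(t)$ as a surface that is intrinsically close to a stable minimal surface in $M$ and then run the two-dimensional Schoen--Yau argument. I would first dispose of small $t$: since $f^{-1}(t)\subset\overline{B(p,t)}$, every level set has diameter at most $2t$, so we may assume $t$ exceeds the target constant. Fix a component $N$ of $f^{-1}(t)$ and points $x_{0},x_{1}\in N$ joined by a path $c\subset N$, and let $\sigma_{i}$ be a minimizing geodesic from $p$ to $x_{i}$, of length $t$. The loop $\gamma:=\sigma_{0}\ast c\ast\overline{\sigma_{1}}$ is null-homotopic because $M$ is simply connected — this is the one place simple connectivity is used, and it is essential — so $\gamma$ bounds a disk. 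Solving the Plateau problem for $\gamma$, after if necessary attaching a long cylindrical collar outside $\overline{B(p,t)}$ to force a minimizer to stay inside (where $R\ge 2$ still holds), produces a least-area branched minimal disk $\Sigma$ with $\partial\Sigma=\gamma$ and $\Sigma\subset\overline{B(p,t)}$; being area minimizing it is stable.

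Next comes the reduction to dimension two. Inserting $\phi\in C_{c}^{\infty}(\operatorname{int}\Sigma)$ into the stability inequality $\int_{\Sigma}|\nabla\phi|^{2}\ge\int_{\Sigma}(\operatorname{Ric}_{M}(\nu,\nu)+|A|^{2})\phi^{2}$ and combining with the Gauss equation $2K_{\Sigma}=R_{M}-2\operatorname{Ric}_{M}(\nu,\nu)-|A|^{2}$ (using $H\equiv 0$) together with $R_{M}\ge 2$ gives
\[
\int_{\Sigma}\bigl(|\nabla\phi|^{2}+K_{\Sigma}\,\phi^{2}\bigr)\ \ge\ \int_{\Sigma}\phi^{2}\qquad\text{for all }\phi\in C_{c}^{\infty}(\operatorname{int}\Sigma).
\]
The key two-dimensional input is then the Schoen--Yau/Gauss--Bonnet estimate: this inequality forbids $\Sigma$ from containing a compactly contained intrinsic geodesic ball of radius exceeding a universal $\rho_{0}=O(2\pi/\sqrt{3})$. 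One feeds the radial cut-off $\phi(x)=v(d_{\Sigma}(x,q))$, with $v$ a suitable multiple of a cosine, into the displayed inequality, rewrites $\int_{\Sigma}K_{\Sigma}\phi^{2}$ using $\frac{d}{dr}\operatorname{length}(\partial B_{r}(q))=2\pi-\int_{B_{r}(q)}K_{\Sigma}$ (Gauss--Bonnet on the disk $B_{r}(q)$), integrates by parts, and reaches a contradiction; the factor $\sqrt{3}$ — equivalently the sharp ``band width'' $2\pi/\sqrt 3$ for $R\ge 2$ in dimension three — emerges from this computation. Hence every point of $\Sigma$ lies within intrinsic distance $\rho_{0}$ of $\partial\Sigma$.

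It remains to convert this into an extrinsic bound on $N$. The subtlety is that $\partial\Sigma$ contains the arc $c$, so ``$\rho_{0}$-close to $\partial\Sigma$'' is vacuous at points of $c$; what is needed is control relative to the two radial arcs $\sigma_{0}\cup\sigma_{1}$ alone. Using $\Sigma\subset\overline{B(p,t)}$ and that $c\subset f^{-1}(t)$ is the maximum level of $f|_{\Sigma}$, I would reflect $\Sigma$ across $c$ — replacing $c$ by a shortest path in $N$, and/or passing to a free-boundary minimizer along the distance sphere so that the doubled metric acquires no negative singular curvature along $c$; the doubled surface is again a disk, the displayed inequality survives for reflection-symmetric test functions, and re-running the ball estimate on the double shows $\Sigma$ is within intrinsic distance $C=O(\rho_{0})$ of $\sigma_{0}\cup\sigma_{1}$. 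Granting this, the endgame is elementary: for $z=c(s)$ pick $w\in\sigma_{0}\cup\sigma_{1}$ with $d_{\Sigma}(z,w)\le C$; if $w\in\sigma_{0}$ then, since $\sigma_{0}$ is a unit-speed minimizing geodesic from $p$ and $d_{M}(z,p)=t$, we get $d_{M}(p,w)\ge t-C$, hence $d_{M}(w,x_{0})=t-d_{M}(p,w)\le C$ and $d_{M}(z,x_{0})\le 2C$; symmetrically $d_{M}(z,x_{1})\le 2C$ when $w\in\sigma_{1}$. The closed sets $\{s:d_{M}(c(s),x_{0})\le 2C\}$ and $\{s:d_{M}(c(s),x_{1})\le 2C\}$ cover $[0,1]$ and contain $0$ and $1$, so they meet; at a common parameter $d_{M}(x_{0},x_{1})\le 4C$. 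As $x_{0},x_{1}\in N$ were arbitrary, $\operatorname{diam}N\le 4C$, and a careful accounting of the constants in the Gauss--Bonnet estimate and the reflection step yields the stated bound $\tfrac{12}{\sqrt{3}}\pi$.

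I expect this third step to be the main obstacle: the area-minimizing disk has no a priori geometric control near the arc $c$, the distance sphere $f^{-1}(t)$ is not convex for large $t$, so the free-boundary comparison and the sign of the singular curvature produced by the reflection must be handled with care. An alternative that sidesteps part of this is to replace $\Sigma$ by a $\mu$-bubble inside a thin slab $f^{-1}([t-w,t+w])$ about $N$: this produces directly a closed surface carrying the displayed inequality with a strictly positive constant and lying $f$-close to $N$, whose intrinsic diameter is then bounded by the same two-dimensional argument.
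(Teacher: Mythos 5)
Your first two steps are sound and match the standard machinery: the loop $\gamma=\sigma_{0}\ast c\ast\overline{\sigma_{1}}$ is null-homotopic, a least-area spanning disk is stable, stability plus the traced Gauss equation gives $\int_{\Sigma}(|\nabla\phi|^{2}+K_{\Sigma}\phi^{2})\geq\int_{\Sigma}\phi^{2}$, and the Gauss--Bonnet/cut-off argument then bounds the intrinsic distance of every point of $\Sigma$ to $\partial\Sigma$. This is exactly the input that yields the \emph{filling radius} bound for $1$-cycles (when $\partial\Sigma$ is the cycle you actually care about, ``close to $\partial\Sigma$'' is exactly what you want), and it is the three-dimensional analogue of what this paper extracts from Shen--Ye in Section 2. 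The paper does not reprove Theorem 1.1 --- it is quoted from Gromov--Lawson --- but its own route from a filling-radius bound to a level-set diameter bound is Lemma 1.2/3.1 (Chodosh--Li--Liokumovich), not the route you take.

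The genuine gap is your third step. Converting ``every point of $\Sigma$ is within $\rho_{0}$ of $\partial\Sigma$'' into ``every point of $c$ is within $O(\rho_{0})$ of $\sigma_{0}\cup\sigma_{1}$'' by reflecting across $c$ does not go through: the level set $f^{-1}(t)$ of a distance function is in general only Lipschitz, is not mean-convex, and carries no bound on its second fundamental form, so there is no free-boundary minimizer meeting it orthogonally with controlled geometry, and the doubled surface acquires singular curvature along $c$ of uncontrolled sign, which destroys the stability inequality you need to re-run the ball estimate. You flag this yourself as the main obstacle, and it is not a technicality --- without it the distance-to-boundary estimate says nothing at points of $c$. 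The standard repair, consistent with this paper's own framework, is a linking argument rather than a reflection: take a suitable piece $\Sigma_{2}$ of a distance sphere $\partial B_{r}(x_{0})$ cut off by a tube around $\sigma_{0}$, fill its boundary circle $\Sigma_{1}$ within distance $L$ of itself using the filling-radius bound you have already established, and observe that the resulting closed $2$-cycle has algebraic intersection number $\pm 1$ with the triangle $T$, contradicting $[T]=0$ in $H_{1}$ of the simply connected manifold. This uses simple connectivity exactly once (as in your argument), never needs any control of $\Sigma$ near $c$, and produces the stated constant ($6$ times the filling radius $2\pi/\sqrt{3}$ for $R\geq 2$). Your $\mu$-bubble alternative suffers from a parallel defect: it is not explained why the diameter of a separating bubble in the slab controls the diameter of the component $N$ itself.
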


This implies that a certain intermediate diameter quantity called Urysohn 1-width can be bounded from scalar curvature. More recently, Liokumovich-Maximo \cite {Yd20} proved the following conjecture of Gromov:

\begin{theorem}(Liokumovich-Maximo \cite{Yd20})
    Let $(M^{3},g)$ be a compact manifold with positive scalar curvature $R_{h}\geq\Lambda_{0}>0$.Then there exists a Morse function $f:M\to \mathbb{R}$, such that for every $x\in \mathbb{R}$ and each connected component $\Sigma$ of $f^{-1}(x)$ we have:
\begin{itemize}
    \item[(a)] $Area(\Sigma_{x})\leq \frac{112\pi}{\Lambda_{0}}$.
    \item[(b)] $diam(\Sigma_{x})\leq \sqrt{\frac{2}{3}}\frac{26\pi}{\sqrt{\Lambda_{0}}}$.

    \item[(c)] $\Sigma_{x}$ has genus at most 2. 
\end{itemize}

\end{theorem}
This result was further extended to complete non-compact 3-manifolds by Lioku\-movich-Wang \cite{lw23}. It is therefore natural to consider  similar questions in higher dimensions.

Since the standard product metric on $S^{2}\times \mathbb{R}^{2}$ does not have finite 1-width, it is not possible to hope to control the 1-width of 4-manifolds from a lower bound on scalar curvature alone. In Hirsch-Brendle-Johne\cite{ssa22}, the authors focused on intermediate curvatures, which inspired us to consider this problem with biRicci curvature bound in this work. 

One important method was proved in recent work of Chodosh-Li-Liokumovich \cite{cll23} :
\begin{lemma}(Chodosh-Li-Liokumovich\cite{cll23})
Assume that $(N^{n},g)$ has the property that any embedded (n-2)-submanifold in the universal cover $\Sigma_{n-2}\in \Tilde{N}$ can be filled in $B_{L}(\Sigma_{n-2})$. Then the universal cover $(\Tilde{N},\Tilde{g})$ has the property that:\\
For any $p\in \Tilde{N}$, each component of the level set of $d(p,\cdot)$ has diameter $\leq 20L$.
\end{lemma}

They showed that in some sufficiently connected 4,5-dimensional manifolds the above assumption holds for filling radius and thus those manifolds have finite 1-width. In addition, in \cite{x23}, Xu showed that:
\begin{theorem} (Xu \cite{x23})
    Let $M$ be a complete simply-connected manifold with dimension $n \leq5$ and bi-Ricci curvature $biRic(M) \geq \lambda > 0$. Then M has finite Urysohn 1-width.
\end{theorem}

The biRicci curvature condition is an intermediate curvature assumption between scalar and Ricci. It was used on recent work of Chodosh-Li-Minter-Stryker \cite{ccpd24} to classify complete stable minimal immersions in $\mathbb{R}^5$.

Our first goal is to prove a Urysohn 1-width bound for biRicci curvature in the vein of Theorem \ref{thmgl}. We obtain:

\begin{theorem}
    For $n=4,5$, if M is a closed n-manifold that admits a metric with biRicci curvature $biRic(M)\geq \lambda$, then, there is a metric graph $(K,d)$ and a distance non-increasing map $\phi :(M^{n},g)\rightarrow (K,d)$ so that $diam(\phi^{-1}(p))\leq c(n,\lambda)$.
\end{theorem}  
A quick remark about the above theorem is that the map to the graph arises from the distance function. As a consequence, we can show the following about the fundamental groups for such manifolds:
\begin{theorem}
    For $n=4,5$, if M is a closed n-manifold that admits a metric with positive biRicci curvature then the fundamental group, $\pi_{1}(M)$, is virtually free.
\end{theorem}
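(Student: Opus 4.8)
The plan is to deduce the statement from Wolfson's theorem (\cite{w12}): it suffices to show that $M$ admits a uniform upper bound, depending only on $n$ and $\lambda$, on the filling radius of curves, a condition that is naturally checked on the universal cover. Since $M$ is closed, compactness provides $\lambda>0$ with $\mathrm{biRic}(M)\ge\lambda$, and since the covering projection $\pi\colon\widetilde M\to M$ is a local isometry this lower bound persists on the (in general non-compact) universal cover $\widetilde M$.

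First I would promote the Shen--Ye estimate (\cite{sr96}) to a filling statement for submanifolds of $\widetilde M$: the $\mu$-bubble / stable minimal hypersurface argument underlying it is local in character, so it should yield that every embedded closed $(n-2)$-submanifold $\Sigma\subset\widetilde M$ can be filled inside $B_L(\Sigma)$ with $L=L(n,\lambda)$ of the same order as the constant $c(n)$. This is exactly the hypothesis of the Chodosh--Li--Liokumovich lemma (\cite{cll23}) applied to $\widetilde M$, whence for every $p\in\widetilde M$ each connected component of a level set of $d_{\widetilde g}(p,\cdot)$ has diameter at most $20L$; in other words, the $1$-width estimate proved above holds for $\widetilde M$ and not merely for $M$ itself.

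Next I would run the Gromov--Lawson slicing argument in $\widetilde M$. Given an embedded closed curve $\gamma\subset\widetilde M$, choose $p\in\gamma$ and slice by the level sets of $f=d_{\widetilde g}(p,\cdot)$; since the level-set components meeting a neighborhood of $\gamma$ have diameter at most $20L$, one can assemble a singular $2$-chain with boundary $\gamma$ contained in the $r_0$-neighborhood of $\gamma$ for a constant $r_0=r_0(n,\lambda)$. Doing this uniformly over all curves gives the bound on the filling radius of curves required by Wolfson's theorem, and so $\pi_1(M)$ is virtually free. As a consistency check this immediately recovers the application quoted in the abstract, since $\pi_1(T^2\times S^2)=\mathbb Z^2$ contains $\mathbb Z^2$ and is therefore not virtually free.

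The hard part will be the first step: verifying that the Shen--Ye argument genuinely localizes, i.e.\ that the $\mu$-bubble construction can be performed inside large geodesic balls of the open manifold $\widetilde M$ with adequate boundary control, so that one obtains the neighborhood-filling property for $(n-2)$-cycles in $\widetilde M$ rather than only the global filling-radius bound for the closed manifold $M$. Once this is secured, the remaining two steps are essentially formal: the second is a direct invocation of the Chodosh--Li--Liokumovich lemma, and the third is the classical slicing trick, the only point requiring care being that the construction produce an honest $2$-chain rather than merely a metric-graph (Reeb) approximation, which is arranged component by component using the diameter bound.
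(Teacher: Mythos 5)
Your first two steps coincide with the paper's route: lift the biRicci lower bound to the universal cover, upgrade the Shen--Ye estimate to a filling statement for $(n-2)$-cycles there, and feed this into the Chodosh--Li--Liokumovich lemma to get that every level-set component of $d_{\tilde g}(p,\cdot)$ in $\widetilde M$ has diameter at most $20L$. (Your worry about localizing Shen--Ye to the non-compact cover is legitimate; the paper glosses over the same point by applying its Theorem 2.6, stated for closed manifolds, directly to $\widetilde M$.)

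The genuine gap is in your third step, where you pass from the width bound on $\widetilde M$ to a uniform bound on the filling radius of curves so as to invoke Wolfson's theorem. The slicing argument does not deliver this. If you cone $\gamma$ off to the base point $p$ along minimizing geodesics, the resulting $2$-chain lives in a ball about $p$ of radius comparable to the length of $\gamma$, not in a bounded neighborhood of $\gamma$. If instead you cut $\gamma$ along the level sets and reconnect crossing points inside fibers of diameter $\le 20L$, you only decompose $\gamma$ (up to a homotopy supported near $\gamma$) into loops of \emph{diameter} $\le C(L)$; but a loop of small diameter in a simply connected open manifold need not bound within a bounded neighborhood of itself, and nothing in the hypotheses (the filling estimate is for $(n-2)$-cycles, not $1$-cycles when $n=4,5$) controls where such a small loop bounds. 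This is precisely why the paper does not verify Wolfson's hypothesis: it instead quotes the Chodosh--Li--Liokumovich lemma that a closed manifold whose universal cover has bounded Urysohn $1$-width admits no one-ended finitely generated subgroup of $\pi_1$, and then runs the Ramachandran--Wolfson/Stallings argument (iterated free-product decomposition into two-ended, hence infinite cyclic, factors, with the torsion handled by passing to a finite cover). To repair your write-up, replace the fill-radius-of-curves step either by that ends-plus-Stallings argument or by an actual proof that small-diameter loops in $\widetilde M$ have uniformly bounded filling radius, which is not available here.
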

By applying theorem 1.6 to 4-manifold $T^{2}\times S^{2}$, we can give another quick proof of the proposition:
\begin{proposition}
    (Hirsch-Brendle-Johne\cite{ssa22})
    $T^{2}\times S^{2}$ does not admit a metric with positive biRicci curvature.
\end{proposition}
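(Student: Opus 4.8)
The plan is to deduce this immediately from Theorem 1.6, adding no new geometric content. First I would observe that $T^{2}\times S^{2}$ is a closed $4$-manifold, hence lies in the range $n=4,5$ to which Sections 2 and 3 apply, and that $\pi_{1}(T^{2}\times S^{2})\cong\pi_{1}(T^{2})\cong\mathbb{Z}^{2}$ because $S^{2}$ is simply connected.

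Suppose, for contradiction, that $T^{2}\times S^{2}$ admits a metric $g$ with positive biRicci curvature. Since the manifold is compact we have $biRic(g)\geq\lambda$ for some $\lambda>0$, so by Theorem 1.5 the manifold has finite Urysohn $1$-width, and then Theorem 1.6 (equivalently Theorem 3.7 together with the fact that $\mathbb{Z}^{2}$ is torsion free) forces $\pi_{1}(T^{2}\times S^{2})\cong\mathbb{Z}^{2}$ to be a free group of finite rank. But $\mathbb{Z}^{2}$ is not free. Alternatively, Theorem 3.4 gives a contradiction on the nose: $G=\mathbb{Z}^{2}$ is a finitely generated subgroup of $\pi_{1}(T^{2}\times S^{2})$ having exactly one end, which Theorem 3.4 forbids.

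The only item requiring justification is the group-theoretic fact that $\mathbb{Z}^{2}$ has one end, equivalently that it is not virtually free: deleting any finite subset from the standard Cayley graph of $\mathbb{Z}^{2}$ leaves a single unbounded component, whereas $\mathbb{Z}$ has two ends and a free group of rank $\geq 2$ has infinitely many, so no finite-index subgroup of $\mathbb{Z}^{2}$ — each of which is again $\cong\mathbb{Z}^{2}$ — can be free. I expect no real obstacle: every substantive step has already been carried out in the preceding sections, and the argument is exactly the \emph{quick application} promised in the abstract.
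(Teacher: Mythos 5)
Your proposal is correct and follows essentially the same route as the paper: both deduce from Theorem 1.6 that $\pi_{1}(T^{2}\times S^{2})\cong\mathbb{Z}^{2}$ would have to be virtually free, and then observe that it is not (the paper notes that the only free subgroups of an abelian group are cyclic and hence of infinite index, while you note that every finite-index subgroup is again $\mathbb{Z}^{2}$ and hence not free). Your version is, if anything, slightly more careful in spelling out the compactness step and the one-end alternative via Theorem 3.4.
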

In the remaining part of the paper, we investigate the volume growth of manifolds with a uniform bound on BiRicci curvature. For this part, we assume also that the metric has non-negative Ricci curvature outside geodesic ball of radius $a$. Using a similar trick as in Chodosh-Li-Stryker(\cite{ccd24}), we can prove that such manifolds have at most linear volume growth:
 
\begin{theorem}
    For $n=4,5$, if $M$ is a simply connected complete n-manifold that admits a metric with biRicci curvature $biRic(M)\geq \lambda>0$ and nonnegative Ricci curvautre outside geodesic ball of radius $a$, then $M$ is either compact, or $0<c_{1}\leq \liminf \limits_{r\to \infty}\frac{Vol(B_{r}(p))}{r}\leq\limsup\limits_{r\to \infty}\frac{Vol(B_{r}(p))}{r}\leq c_{2}$.\\
\end{theorem}
As a spoil of the proof, we can establish a uniform upper bound of $c_{2}$, but it is unknown to the author if we can find universal lower bound for $c_{1}$. In this direction, it is interesting to note a recent result by Zhu\cite{z24}, where he generalized the Calabi-Yau theorem of sublinear volume growth manifold manifolds with non-negative Ricci curvature outside compact sets to complete manifolds with non-negative scalar curvature in certain dimensions.

While preparing this manuscript, the author learned that Antonelli-Xu \cite{gk24} independently obtained the linear volume growth estimate, among many other results.

\subsection*{Acknowledgements}The author would like to thank Davi Maximo for multiple valuable suggestions and improvements for this work. 

\section{Filling radius bound, Urysohn 1-width and fundamental group}
In this section we first recall the work of Shen-Ye \cite{sr96} of the filling radius estimate for closed n-manifold that admits a metric with biRicci curvature bounded below by positive number. Recall the the biRicci curvature $biRicci(u,v)=Ric(u)+Ric(v)-K(u,v)$, where $K(u,v)$ is the sectional curvature of $u,v$. The key idea is to use the positive function of elliptic operator given by strictly stable minimal surface with boundary to construct a weighted functional and thus bounds the filling radius. If we let $q=-|A|^{2}-Ric(\nu,\nu)$, then we can apply work by Schoen of elliptic operator. We say a hypersurface with positive first eigenvalue of stability operator is strictly stable. Consequently we have:
\begin{proposition}
    If an embedded minimal hypersurface $S\subset M$ is strictly stable then $\exists$ a positive function $f$ satisfying:
\begin{equation}
    \Delta_{s}f+|A|^{2}f+Ric(\nu,\nu)f=0
\end{equation}
\end{proposition}
We then constructed a weighted functional\\
\begin{equation}
    I(c)=\int_{0}^{l}f|\dot{c}|dt
\end{equation}
for parameterized curve $c$ on $S$ with $\dot{c}=\frac{dc}{dt}$, $l$ the length of curve.
\begin{theorem}(Shen-Ye\cite{sr96})
  For $n=4,5$, suppose M is a (closed) n-manifold that admits a metric with biRicci curvature $biRic(M)\geq \lambda$. If an embedded minimal hypersurface $S\subset M$ is strictly stable, and $c$ is a minimizer of $I$, then the length of $c$, $l\leq c(n)$. Where $c(4)=\sqrt{\frac{2}{\lambda}}\pi, c(5)=\sqrt{\frac{4}{\lambda}}\pi$.
 \end{theorem}
The detailed proof is given in Shen-Ye\cite{sr96} and the key idea is to calculate the first and second variation of the weighted functional $I$ defined above. \\

Shen and Ye defined the homology radius of a codimension-2 embedded submanifold $\Gamma$ as(which is same as filling radius we discussed above):
\begin{definition}
\begin{equation}
 r(\Gamma)=\sup\left\{r>0, \Gamma \text{ is not homologous to 0 in r-neighborhood of }\Gamma \right\}
\end{equation}
\end{definition}
The homology radius of manifold $M$ is defined as:
\begin{definition}

\begin{equation}
\begin{aligned}
r(M)=\sup\left\{r(\Gamma), \Gamma\subset M \text{is an embedded} \right.\\
\left. \text{ submanifold of codimension-2 homologous to 0 in }M \right\}
 \end{aligned}
\end{equation}  
\end{definition}
\begin{theorem}(Shen-Ye\cite{sr96})
    For $n=4,5$, if M is a closed n-manifold that admits a metric with biRicci curvature $biRic(M)\geq \lambda$. Then if $S$ is an embedded stable minimal hypersurface, then the diameter of $S$, $diam(S)\leq c(n)$. Where $c(4)=\sqrt{\frac{2}{\lambda}}\pi, c(5)=\sqrt{\frac{4}{\lambda}}\pi .$
\end{theorem}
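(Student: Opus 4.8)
The main analytic work, namely the length bound $l\le c(n)$ for minimizers of $I$, is exactly the estimate established above, so the plan is to deduce the diameter bound from it by a compactness argument. Fix two points $p,q$ in (a connected component of) $S$; it suffices to show $d_{S}(p,q)\le c(n)$, since taking the supremum over $p,q$ then yields $diam(S)\le c(n)$. To this end I would minimize the weighted functional $I(c)=\int_{0}^{l}f|\dot c|\,dt$ over all Lipschitz curves $c$ in $S$ from $p$ to $q$, where $f>0$ is the function supplied by the stability lemma above.

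First I would check that an $I$-minimizer exists. Since $S$ is compact and $f$ is smooth and strictly positive, $f\ge f_{0}>0$, so $I(c)\ge f_{0}\cdot\mathrm{length}(c)$, and the lengths along any $I$-minimizing sequence are uniformly bounded. Reparametrizing by constant speed and applying Arzel\`a--Ascoli, a subsequence converges uniformly to a curve $c_{\infty}$ in $S$ from $p$ to $q$; lower semicontinuity of length together with uniform convergence and continuity of $f$ gives $I(c_{\infty})\le\liminf_{k}I(c_{k})$, so $c_{\infty}$ is a minimizer. By the generalized geodesic equation from the first-variation lemma it is smooth, and after removing loops we may take it embedded. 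Applying the length estimate above to $c_{\infty}$ gives $\mathrm{length}(c_{\infty})\le c(n)$, and since $c_{\infty}$ is a curve in $S$ from $p$ to $q$ we conclude $d_{S}(p,q)\le\mathrm{length}(c_{\infty})\le c(n)$.

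It remains to pass from ``stable'' to ``strictly stable'', which is what produces $f$. If $S$ is stable, its stability operator $L=\Delta_{S}+|A|^{2}+Ric(\nu,\nu)$ has first (Dirichlet) eigenvalue $\lambda_{1}(S)\ge 0$: when $\lambda_{1}(S)=0$ the first eigenfunction is already a positive solution of $Lf=0$ and the argument above runs verbatim, while when $\lambda_{1}(S)>0$ the first eigenfunction is a positive supersolution $Lf\le 0$, which is all the second-variation computation behind the length estimate actually uses. Alternatively, for each small $\epsilon>0$ one perturbs the ambient metric so that the biRicci bound degrades to $\lambda-\epsilon$ while $S$ becomes strictly stable, applies the strictly stable case to get $diam(S)\le c(n,\lambda-\epsilon)$, and lets $\epsilon\to 0$ using continuity of $c(n,\cdot)$ in $\lambda$. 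I expect this reduction, together with the bookkeeping needed to ensure the competitor class and the minimizer behave well under it, to be the only genuinely delicate ingredient; granting the length estimate, the diameter bound itself is immediate.
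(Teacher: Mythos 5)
Your proposal is correct and runs on the same engine as the paper's argument---produce a positive weight $f$ from stability, minimize the weighted functional $I$, and invoke the length bound of Theorem 2.2---but the two proofs handle the passage from ``stable'' to ``strictly stable'' differently, and this is worth comparing. The paper excises small geodesic balls $B_{\delta}(p)$ and $B_{\delta}(q)$: the complement $S_{\delta}$ is a strictly stable hypersurface \emph{with boundary}, so Lemma 2.1 (Fischer-Colbrie--Schoen) supplies a genuine positive solution of $\Delta_{S}f+|A|^{2}f+Ric(\nu,\nu)f=0$ there; one then minimizes $I$ among curves joining the two boundary spheres, gets $d(p,q)\leq 2\delta+c(n)$ by the triangle inequality, and lets $\delta\to 0$. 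This excision is not cosmetic: on a \emph{closed} hypersurface a positive solution of the Jacobi equation exists only when $\lambda_{1}=0$, so Lemma 2.1 cannot be quoted on $S$ itself. Your route stays on all of $S$, takes the first eigenfunction as the weight, and observes that it is a positive supersolution, $\Delta_{S}f+|A|^{2}f+Ric(\nu,\nu)f=-\lambda_{1}f\leq 0$, which is indeed all that the second-variation computation behind Theorem 2.2 uses (the extra term $-\lambda_{1}f$ only improves the inequality). This avoids the excision and the $\delta\to 0$ limit, at the price of not being able to cite Lemma 2.1 and Theorem 2.2 verbatim: you must reopen the proof of the length bound and verify that only the supersolution inequality enters, which you correctly flag. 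Two small cautions: your existence argument assumes $S$ is compact without boundary so that $f\geq f_{0}>0$; when $S$ has boundary (as in the filling-radius application, where $\partial S=\Gamma$) the Dirichlet eigenfunction vanishes on $\partial S$ and the paper's excision device, or another normalization of $f$, is genuinely needed. And the metric-perturbation alternative you sketch at the end is vaguer and unnecessary; the eigenfunction argument is the one to keep.
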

Here we quickly summarize the proof by Shen-Ye. 
\begin{proof}
Suppose $p,q\in S$, then by taking of geodesic balls with radius $\delta$ and centers $p,q$ we obtain a strictly stable hypersurface $S_{\delta}$. Construct the positive function $f$ given in proposition 2.1 and then define the functional $I$. Take a minimizer among all curves connecting $\partial B_{\delta}(p)$ and $\partial B_{\delta}(q)$, $c$. Then by theorem 2.2 we have $l(c)\leq c(n)$. Then by triangle inequality we have $d(p,q)\leq 2\delta+c(n) $. By taking $\delta\rightarrow 0$ we finished the proof. 
\end{proof}

Then for any codimension-2 cycle $\Gamma=\partial S$ of 4 and 5 dimensional manifolds, $S$ can be presented by embedded minimal hypersurfaces and thus we can bound the filling radius.
\begin{theorem}(Shen-Ye\cite{sr96})
    For $n=4,5$, if $M$ is a closed n-manifold that admits a metric with biRicci curvature $biRic(M)\geq \lambda$, then the filling radius of $M$, $r(M)\leq c(n)$. Where $c(4)=\sqrt{\frac{2}{\lambda}}\pi, c(5)=\sqrt{\frac{4}{\lambda}}\pi .$
\end{theorem}

  Recall the lemma of Chodosh-Li-Liokumovich\cite{cll23}, they provided a method of proving Urysohn 1-width with filling radius bound. 

\begin{lemma}(Chodosh-Li-Liokumovich\cite{cll23})
Assume that $(N^{n},g)$ has the property that any embedded $(n-2)$-submanifold in the universal cover $\Sigma_{n-2}\in \Tilde{N}$ can be filled in $B_{L}(\Sigma_{n-2})$. Then the universal cover $(\Tilde{N},\Tilde{g})$ has the property that:\\
For any $p\in \Tilde{N}$, each component of the level set of $d(p,\cdot)$ has diameter $\leq 20L$.
\end{lemma}

Mimicing the proof of above lemma provided by Chodosh, Li and Liokumovich, we can prove the theorem 1.5:

\begin{proof}
Suppose M is a closed n-manifold that admits a metric with biRicci curvature $biRic(M)\geq \lambda$. Then for the induced metric on $\Tilde{M}$ such that covering map $\pi$ is local isometry, $biRic(\Tilde{M})\geq \lambda$. Therefore by theorem 2.5 the filling radius of $\Tilde{M}$ is bounded by $L$. Let $p\in \Tilde{M}$ be a point such that $f(q)=d(p,q)$ violates the conclusion above. Then we can find $x,y\in f^{-1}(t)\subset \Tilde{M}$ with $d(x,y)>20L$, where $L$ is the filling radius bound proved in above section. Connect x,y,p with minimizing geodesics $\gamma,\eta_{x},\eta_{y}$, where $\eta_{x}$ goes from $p$ to $x$, $\gamma$ goes from $x$ to $y$ and $\eta_{x}$ goes from $y$ to $p$. Let $T$ be the triangle $\eta_{x}\star\gamma\star\eta_{y}$. Fix $0<l<L$ such that $\partial B_{4L+l}(x)$ and $\partial B_{L+\epsilon}(\eta_{x})$ are smooth hypersurfaces intersecting transversely(Perturbations of metric balls might be needed to make boundary smooth). Define $\Sigma_{n-2}=\partial B_{4L+l}(x) \cap \partial B_{L+\epsilon}(\eta_{x})$, and $\Sigma_{n-1}=\partial B_{4L+l}(x) \cap \overline{B_{L+\epsilon}}$. Notice $\partial \Sigma_{n-1}=\Sigma_{n-2}$, and thus $\Sigma_{n-2}$ is actually a boundary and can be applied filling radius estimate.\\
If $\Sigma_{n-2}=\varnothing$ then $d(\Sigma_{n-2},\cdot)=\infty$ in the following discussion. Then by construction we have $d(\Sigma_{n-2},\eta_{x})>L$ and since $\eta_{x}$ is minimizing geodesic, it intersects transversely with $\Sigma_{n-1}$ exactly once. We have following distance estimate:\\
\begin{equation}
    d(\Sigma_{n-2},\gamma)\geq(\Sigma_{n-1},\gamma)>L
\end{equation}
\begin{equation}
    \eta_{y}\cap \Sigma_{n-1}=\emptyset
\end{equation}
\begin{equation}
    d(\Sigma_{n-2},\eta_{y})>L
\end{equation}

To conclude what we got above, we have an one cycle $T$ consists of geodesics, a submanifold $\Sigma_{n-2}$ whose distance to each component of $T$ is strictly bigger than $L$, and a hypersurface $\Sigma_{n-1}$ with boundary $\Sigma_{n-2}$ and intersect $T$ transversely exactly once. 

Now we are going to finish the proof with simply connectedness of $\Tilde{M}$. Perturb the triangle $T$ to $T'$ to be a smoothly embedded 1-cycle intersects transversely $\Sigma_{n-1}$ exactly once.(It holds for $T$ and thus true for small perturbation). If $\Sigma_{n-2}\neq \varnothing$ then since it is boundary of $\Sigma_{n-1}$ and by assumption of filling radius, we can find a $\Sigma_{n-1}'\subset B_{L}(\Sigma_{n-2})$. We have $dist(\Sigma_{2},T)>L$, and thus $\Sigma_{n-1}'\cap T=\emptyset$. Therefore if we perturb $T$ small we have $\Sigma_{n-1}'\cap T'=\emptyset$. Then $T'$ has nontrivial algebraic intersection with the cycle $\Sigma_{n-1}-\Sigma_{n-1}'$ contradicts the fact that $\Tilde{N}$ is simply connected. If $\Sigma_{n-2}=\emptyset$, then $\Sigma_{n-1}$ itself is a cycle with nontrivial algebraic intersection with $T'$. Which is still a contradiction. We finish proving the 1-width of the universal cover of the manifolds. There is only 1 small lemma to be done.
\end{proof}

\begin{lemma}
    Suppose $\Tilde{N}$ is the universal cover of $N$ with induced metric having the property that $p\in \Tilde{N}$, each component of the level set of $d(p,\cdot)$ has diameter $\leq C$, then $N$ also has this property.
\end{lemma}
\begin{proof}
Suppose there is $p,x,y\in N$ with $d(x,p)=d(x,y)=t$ in the same connected component of $d^{-1}(t)$ and $d(x,y)>C$. Then let $l_{x}$ be the minimizing geodesic connecting $p,x$ and $l_{y}$ connecting $p,y$. Find $\Tilde{{p}}$ with $\pi(\Tilde{{p}})=p$. Since the lift of minimizing geodesic is minimizing geodesic, there exists $\Tilde{{x}}$ with $\pi(\Tilde{{x}})=x$ and geodesic $\Tilde{{l}}_{x}$ connecting $\Tilde{{p}}$ and $\Tilde{{x}}$ with same length of $l_{x}$. \\
Also for any preimage of $y$ under covering map, $\Tilde{y}$, by the covering map and similar discussion above we have $d(x,y)\leq d(\Tilde{x},\Tilde{y})$. Which gives contradiction if we can prove that $\Tilde{x}$ and $\Tilde{y}$ are in the same connected component of $\Tilde{d}^{-1}(t)$. Take a compact neighborhood $N'$ of $p,x,y$, then for any deck transformation that is not identity $\sigma$, $q\in \Tilde{N'}$, $\Tilde{d}(q,\sigma(q))\geq C_{0}>0$. Therefore if we let $U=\left\{z\in N:d(p,z)=t\right\}$, for any $z_{0}\in N'$, $\exists r_{0}>0$ such that if $d(z_{1},z_{0})<r_{0}$, $z_{1}\in U$ then we can lift to have $\Tilde{z_{0}}$, $\Tilde{z_{0}}$, with $\Tilde{d}(\Tilde{z_{0}},\Tilde{p})=\Tilde{d}(\Tilde{z_{1}},\Tilde{p})=t$. Thus we can lift a curve in $U$ connecting $x$ and $y$, which implies $\Tilde{x}$ and $\Tilde{y}$ are in the same connected component of $\Tilde{d}^{-1}(t)$. 
\end{proof}

In Ramachandran-Wolfson\cite{mw09}, one important use of fill radius of curves bound is to show that any finitely generated subgroup $G$ of $\pi_{1}(N)$ can't have one end, which can prove the fundamental group is virtually free. Here we applied their method directly to conclude the case for closed manifolds with positive biRicci curvature. 

\begin{theorem}(Ramachandran-Wolfson\cite{mw09})
    Let N be a closed Riemannian manifold. Suppose that the universal cover $\pi:\Tilde{N}\rightarrow N$ is given the metric $\Tilde{g}$ such that $\pi$ is a local isometry. If $(\Tilde{N},\Tilde{g})$ has fill radius above then the fundamental group $\pi_{1}(N)$ is virtually free. 
\end{theorem}

\begin{lemma}(Chodosh-Li-Liokumovich \cite{cll23})
    If N is a closed manifold whose universal cover has bounded Urysohn 1-width, then any finitely generated subgroup $G$ of fundamental group $\pi_{1}(N)$ can't have one end. 
\end{lemma}
Combining lemma 2.10 and theorem 1.5 we have the following proposition. 
\begin{proposition}
 
    For $n=4,5$, if M is a closed n-manifold that admits a metric with positive biRicci curvature then any finitely generated subgroup $G$ of fundamental group $\pi_{1}(M)$ can't have one end. 
\end{proposition}

Combining proposition 2.11 and the proof used by Ramachandran-Wolfson, we finished the proof for theorem 1.6.\\

In Hirsch-Brendle-Johne\cite{ssa22} about generalized Geroch conjecture, they provided a proof of proposition 1.7. Here by theorem 1.6 we can give another proof:
\begin{proof}
 Since $\pi_{1}(T^{2}\times S^{2})=\mathbb{Z}\times \mathbb{Z}$, which is abelian. Therefore the only free subgroup is infinitely cyclic.  However any such group will not be of finite index. By theorem 1.5, $T^{2}\times S^{2}$ does not admit a positive biRicci curvature.
\end{proof}

\section{linear volume growth for non-negative ricci curvature and positive bi-ricci curvature}
Notice that we don't use the compactness in proving theorem 1.5, which is necessary in showing the fundamental group. Two key statement used in proof, theorem 2.6 and lemma 2.7 hold when the manifold is complete with uniform positive biRicci curvature lower bound. Therefore, with similar proof above we claim the following property:\\

\begin{proposition}
    For $n=4,5$, if M is a simply connected complete n-manifold that admits a metric with biRicci curvature $biRic(M)\geq \lambda>0$.Then, there is a metric graph $(K,d)$ and a distance non-increasing map $\phi :(M^{n},g)\rightarrow (K,d)$ so that $diam(\phi^{-1}(p))\leq c(n,\lambda)$.
\end{proposition}

 We denote $c(n,\lambda)$ as $L$ in the following discussion. We first consider the volume growth of ends of such manifolds. Here we use the definition of end in Chodosh-Li-Stryker(\cite{ccd24}).
\begin{definition}
    A collection of open sets $\left\{E_{k}\right\}$ is an end adapted to $x$ with length scale $L$ if each $E_{k}$ is an unbounded connected component of $M\backslash \bar{B}_{kL}(x)$ and satisfies $E_{k+1}\subset E_{k}$.
\end{definition}
We listed the topological proposition that will be needed as well:\\
\begin{proposition}(Chodosh-Li-Stryker\cite{ccd24})
    If $M$ is simply connected and $\left\{E_{k}\right\}$ is an end adapted to $x$ with length scale $L$, then both $\bar{E}_{k}\backslash E_{k+1}$ and $\partial E_{k}$ are connected for all $k$.
\end{proposition}
Then we conclude the main theorem for volume growth of ends:\\
\begin{theorem}
     For $n=4,5$, if $M$ is simply connected n-manifolds that admits a metric with biRicci curvature $biRic(M)\geq \lambda>0$ and nonnegative Ricci curvauture and $\left\{E_{k}\right\}$ is an end adapted to $x$ with length scale $L$, then there is a constant (possibly depending on $L$) $C>0$ such that for $M_{k}\coloneq E_{k}\cap B_{(k+1)L}(x)$, $Vol(M_{k})\leq C$.
\end{theorem}

Here we follow the strategy used in proving linear volume growth of complete stable minimal hypersurface in 4-manifolds with positive biRicci curvature in Chodosh-Li-Stryker(\cite{ccd24}). 

\begin{proof}
    Since we can generically chose the point such that the distance function to that point is morse, by perturbing small distance, we may assume that $kL+\frac{L}{2}$ is not a critical point of distance function $f(p)=d(x,p)$. Therefore by connectedness of $M$ we conclude that the level sets $\cup \Sigma_{i}$, where $\Sigma_{i}$ denotes a connected component of the sets $\left\{p|d(x,p)=kL+\frac{L}{2}\right\}$, separate $\partial E_{k}$ from $\partial E_{k+1}$. Then we claim that there exist a single level set $\Sigma_{i_{0}}$ that separate $\partial E_{k}$ from $\partial E_{k+1}$. Suppose not, then since $\cup \Sigma_{i}$ separates $\partial E_{k}$ from $\partial E_{k+1}$, we can take a curve from $\partial E_{k}$ to $\partial E_{k+1}$ in $\bar{M}_{k}$, which intersects level sets transverse and intersects some $\Sigma_{i_{0}}$ odd times. If $\Sigma_{i_{0}}$ does not separate $\partial E_{k}$ from $\partial E_{k+1}$, then we can find another curve from $\partial E_{k}$ to $\partial E_{k+1}$ in $\bar{M}_{k}\backslash \Sigma_{i_{0}}$, which gives a loop with nontrivial intersection with $\Sigma_{i_{0}}$ contradicting that $M$ is simply connected. \\
   
    Then we conclude that $M_{k}$ has bounded diameter. For any $y,z\in M_{k}$, take radial geodesics from $x$ to $y$ and $x$ to $z$, then they both intersect with $\Sigma_{i_{0}}$. The length of radial geodesics in $M_{k}$ is less or equal to $L$ by construction. By proposition 4.1 we have $diam(\Sigma_{i_{0}})\leq 20L$. Therefore $d(y,z)\leq L+20L+L=22L$.\\

Since Ricci curvature of $M$ is bounded by 0, we can take Bishop-Gromov volume comparison to conclude that $Vol(M_{k})\leq C$. 
    
\end{proof}

We nearly finished the proof of the main theorem 1.8 of this section with the fact that the manifolds with nonnegative Ricci curvature outside a compact has finitely many ends. Together with the following Calabi-Yau theorem we finished the proof of theorem 1.8.

\begin{theorem}(Calabi-Yau)
    For $n=4,5$, let $(M^{n},g)$ be a complete n-manifolds with non-negative Ricci curvature outside a compact subset. Then $(M^{n},g)$ has sublinear volume growth iff $M^{n}$ is compact. 
\end{theorem}

\begin{theorem}(Cai \cite{c91})
    If M is an n-manifold that admits non-negative Ricci curvature outside geodesic ball of radius $a$, then the number of ends of $M$ has a universal bounds $b(n,a)$. Where $b(4,a)\leq \frac{8e^{26a}}{3a^{4}}$ and $b(5,a)\leq \frac{5e^{34a}}{2a^{5}}$.
\end{theorem}

Therefore combining theorem 3.4 and theorem 3.6 we can conclude that $M$ has at most linear volume growth outside geodesic ball. And by multiplying the constants we can know that $c_{2}\leq \frac{Vol(B_{n}(22c(n)))b(n,a)}{c(n)}$, where $c(n)$ is the filling radius bound, $B_n(l)$ is the n-Euclidean ball of radius $l$ and $b(n,a)$ is the bound of ends. Notice $\lim\limits_{r\to \infty}\frac{B(x,a)}{r}=0$. In the end, together with the Calabi-Yau theorem 3.5 we finish the proof of theorem 1.8.
\\
\\
Remark: If we additionally assume the whole manifold with non-negative Ricci curvature, then by the standard splitting theorem, the manifolds can have at most 2 ends and we can take the $b(n,a)=2$ in the proof of theorem 1.8. 
\bibliographystyle{amsplain}
\bibliography{work}

\providecommand{\bysame}{\leavevmode\hbox to3em{\hrulefill}\thinspace}
\providecommand{\MR}{\relax\ifhmode\unskip\space\fi MR }
\providecommand{\MRhref}[2]{%
  \href{http://www.ams.org/mathscinet-getitem?mr=#1}{#2}
}
\providecommand{\href}[2]{#2}
\begin{thebibliography}{10}

\bibitem{ssa22}
Simon Brendle, Sven Hirsch, and Florian Johne, \emph{A generalization of
  geroch's conjecture}, Comm. Pure Appl. Math. \textbf{77} (2024), no.~1,
  441--456. \MR{4666629}

\bibitem{c91}
Mingliang Cai, \emph{Ends of riemannian manifolds with nonnegative ricci
  curvature outside a compact set}, Bulletin of the American Mathematical
  Society \textbf{24} (1991), no.~2.

\bibitem{cll23}
Otis Chodosh, Chao Li, and Yevgeny Liokumovich, \emph{Classifying sufficiently
  connected {PSC} manifolds in 4 and 5 dimensions}, Geom. Topol. \textbf{27}
  (2023), no.~4, 1635--1655. \MR{4602422}

\bibitem{ccpd24}
Otis Chodosh, Chao Li, Paul Minter, and Douglas Stryker, \emph{Stable minimal
  hypersurfaces in $\mathbf{R}^5$}, arXiv:2401.01492 (2024).

\bibitem{ccd24}
Otis Chodosh, Chao Li, and Douglas Stryker, \emph{complete stable minimal
  hypersurfaces in positively curved 4-manifolds}, arXiv:2202.07708v2 (2024).

\bibitem{gk24}
Kai~Xu Gioacchino~Antonelli, \emph{New spectral bishop-gromov and bonnet-myers
  theorems and applications to isoperimetry}, arXiv:2405.08918 (2024).

\bibitem{GL80}
Mikhael Gromov and H.~Blaine Lawson, Jr., \emph{Spin and scalar curvature in
  the presence of a fundamental group. {I}}, Ann. of Math. (2) \textbf{111}
  (1980), no.~2, 209--230. \MR{569070}

\bibitem{gl83}
Mikhael Gromov and Herbert~Blaine Lawson, \emph{Positive scalar curvature and
  the dirac operator on complete riemannian manifolds}, Inst. Hautes E ́tudes
  Sci. Publ. Math. (1983), 83–196.

\bibitem{x23}
Xu~Kai, \emph{Dimension constraints in some problems involving intermediate
  curvature}, arXiv:2301.02730 (2023).

\bibitem{Yd20}
Yevgeny Liokumovich and Davi Maximo, \emph{Waist inequality for 3-manifolds
  with positive scalar curvature}, Perspectives in scalar curvature. {V}ol. 2,
  World Sci. Publ., Hackensack, NJ, [2023] \copyright 2023, pp.~799--831.
  \MR{4577931}

\bibitem{mw09}
Mohan Ramachandran and Jon Wolfson, \emph{Fill radius and the fundamental
  group}, J. Topol. Anal. \textbf{2} (2010), no.~1, 99--107. \MR{2646991}

\bibitem{sr96}
Ying Shen and Rugang Ye, \emph{On stable minimal surfaces in manifolds of
  positive bi-{R}icci curvatures}, Duke Math. J. \textbf{85} (1996), no.~1,
  109--116. \MR{1412440}

\bibitem{lw23}
Zhichao~Wang Yevgeny~Liokumovich, \emph{On the waist and width inequality in
  complete 3-manifolds with positive scalar curvature}, arxiv:2308.04044
  (2023).

\bibitem{z24}
Jintian Zhu, \emph{Calabi-yau type theorem for complete manifolds with
  nonnegative scalar curvature}, arXiv:2402.15118 (2024).

\end{thebibliography}
\end{document}